\documentclass[reqno,12pt]{amsart}

\usepackage{epsf}
\usepackage{graphics}
\usepackage{graphicx}
\usepackage{amssymb}
\usepackage{amsmath}

\date{}

\theoremstyle{plain}
\newtheorem{theorem}{Theorem}

\newtheorem{proposition}{Proposition}
\newtheorem{lemma}{Lemma}

\theoremstyle{definition}

\theoremstyle{remark}

\def\N{{\mathbb N}}

\def\R{{\mathbb R}}

\title[The bridge number of arborescent links]{The bridge number of arborescent links \\ with many twigs} 

\author{S.~Baader, R.~Blair, A.~Kjuchukova, F.~Misev}

\begin{document}

\begin{abstract} We prove the meridional rank conjecture for arborescent links associated to plane trees with the following property: all branching points carry a straight branch to at least three leaves. The proof involves an upper bound on the bridge number in terms of the maximal number of link components of the underlying tree, valid for all arborescent links.
\end{abstract}

\maketitle

\thispagestyle{empty}

\section{Introduction}

The family of arborescent tangles can be defined as the minimal family of tangles containing all rational tangles, closed under horizontal and vertical tangle composition~\cite{Co}. Their closures -- arborescent links -- admit a description via weighted plane trees, where each vertex stands for a twisted band, and edges indicate how these bands are glued together. See~\cite{BS, Ga} for a precise definition and Figure~1 for an illustration (ignoring the additional labels and dots in the link diagram for the time being). These descriptions are not unique, since small weights typically allow for simplifications of the underlying tree, without changing the link type.

The meridional rank conjecture by Cappell-Shaneson posits an equality between the bridge number and the meridional rank of a link; see Problem 1.11 in~\cite{Ki}. Early evidence towards this was derived by Boileau and Zimmermann, who showed that two-bridge links are the only links with meridional rank two~\cite{BZ}, and by Rost and Zieschang, who proved the conjecture for torus links~\cite{RZ}.

\begin{figure}[h]
\begin{center}
\raisebox{-0mm}{\includegraphics[scale=1.2]{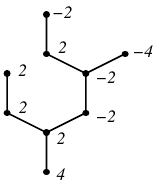}}
\quad
\raisebox{-7mm}{\includegraphics[scale=0.5]{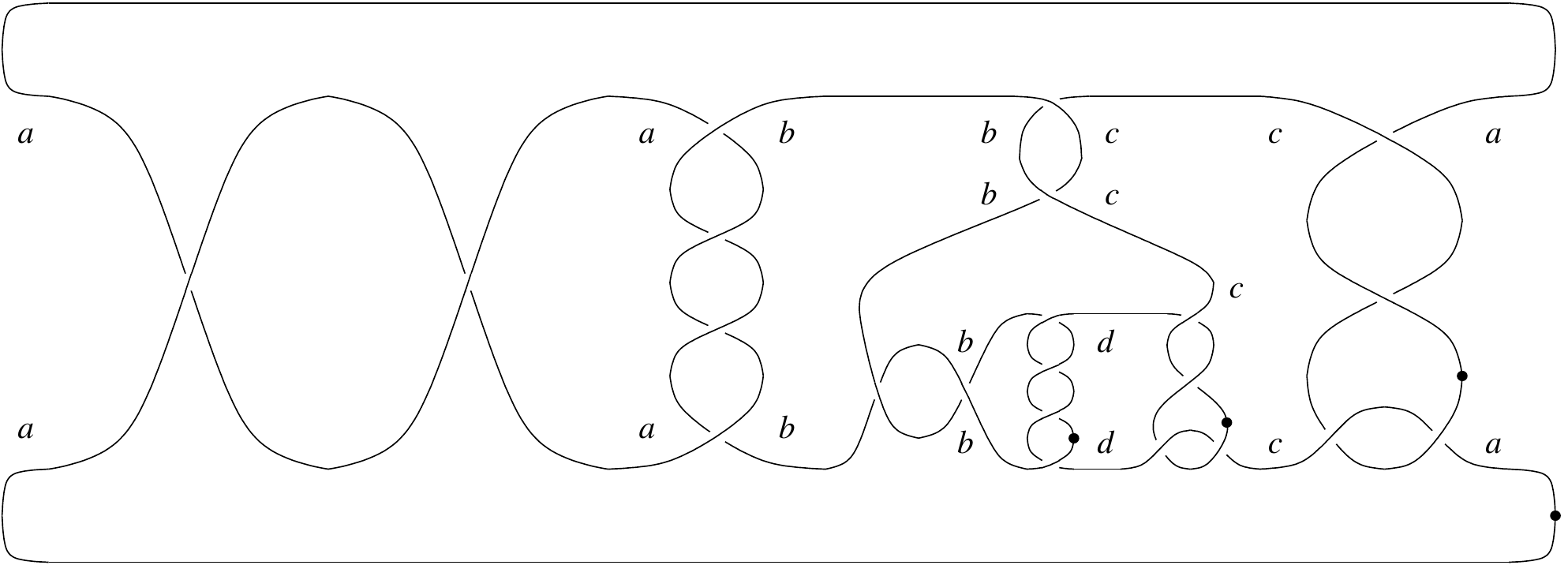}}
\caption{Example of an arborescent knot}
\end{center}
\end{figure}

We recall that the bridge number $\beta(L)$ of a link $L \subset \R^3$ is the minimal number of local maxima of $L$ with respect to a fixed direction, minimised over all isotopic representatives of $L$. The meridional rank $\mu(L)$ is the minimal number of generators of the fundamental group $\pi_1(\R^3 \setminus L)$, where all generators are required to be conjugate to a standard meridional loop of the link $L$. The bridge number of a link is bounded below by its meridional rank.

Given a fixed plane tree $T$, each choice of weights for its vertices determines an arborescent link $L(T)$. 
Define $m(T)$ to be the maximal number of components of $L(T)$ over all links obtained by assigning weights to the vertices of $T$. We will show that $m(T)$ admits a description in terms of the combinatorics of the tree.

\begin{theorem}
	\label{thm:bound}
For every arborescent link $L(T)$ determined by a weighted plane tree $T$, the bridge number of $L(T)$ is bounded above by the maximal component number of $T$:
\[ \beta(L(T)) \leq m(T). \]
\end{theorem}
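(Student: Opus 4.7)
The plan is to induct on the number of vertices of the weighted plane tree $T$. The base case is a single vertex $T$ with weight $w$: then $L(T)$ is a $(2,w)$-torus link of bridge number at most $2$, matching $m(T) = 2$ realised by any even weight.

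For the inductive step I would pick a leaf $\ell$ of $T$ adjacent to some vertex $v$, and set $T' := T \setminus \ell$ with the inherited weights. The inductive hypothesis supplies, for every weighting of $T'$, a bridge presentation of $L(T')$ with at most $m(T')$ local maxima. Plumbing a twisted band corresponding to the edge $(v,\ell)$ onto the Seifert surface for $L(T')$ produces $L(T)$, and the task is to extend the bridge presentation using at most $m(T) - m(T')$ new maxima. Combinatorial considerations should show that this difference is either $0$ or $1$, giving two subcases.

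When $m(T) = m(T') + 1$, a suitable weighting of $\ell$ will create an additional link component, and extending the bridge presentation of $L(T')$ by one local maximum --- the new strand arising from the extra component, or an arc wrapping around the freshly inserted twist region --- will yield at most $m(T)$ bridges. The harder subcase is $m(T) = m(T')$: here the new twisted band must be inserted without increasing the local maximum count. The plan is to argue that a suitably prepared bridge presentation of $L(T')$ exhibits two strands meeting the plumbing disk at the same bridge height, between which the new twist region can be introduced at constant height.

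The main obstacle is precisely this absorbing step: one cannot work with an arbitrary bridge presentation of $L(T')$, but must track, throughout the induction, which pairs of strands remain available to absorb future plumbings. This suggests strengthening the inductive statement to include the combinatorial description of $m(T)$ promised earlier in the paper, together with a matching record of strand pairs in the bridge presentation produced at each stage. Once these enhanced invariants are recursively maintained --- in particular, so that leaves responsible for Case~B plumbings always find their required strand pair --- the inequality $\beta(L(T)) \leq m(T)$ should follow.
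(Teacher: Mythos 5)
Your overall strategy---induction on the vertices of $T$, peeling off a leaf and re-plumbing its twisted band---is genuinely different from the paper's route, which never manipulates bridge presentations directly. The paper instead proves $\beta(L(T))\le f(T)+2$ by bounding the Wirtinger number of the standard arborescent diagram (inducting on ramification points and propagating $f(T)+2$ seeds, with a flype lemma handling rational tangles), and separately proves $m(T)=f(T)+2$ by exhibiting weights realizing $f(T)+2$ components. Your plan, if completed, would bypass the Wirtinger machinery; but as written it has a genuine gap exactly where you locate the ``main obstacle.''

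The unresolved step is the subcase $m(T)=m(T')$: you must insert a new twist region into an existing bridge presentation of $L(T')$ without creating a local maximum. You describe the intended mechanism (two strands of the plumbing region at the same height, twist inserted ``at constant height'') and then propose to strengthen the induction so that such strand pairs are ``recursively maintained,'' but you never specify the strengthened invariant, prove it is preserved, or show the insertion can actually be performed at constant height. This is not a routine verification: a twist region inserted between two strands of a bridge presentation generically forces new maxima unless the two strands are parallel and monotone near the plumbing disk, and guaranteeing this after many prior plumbings is precisely the hard part. (The paper's analogue is the statement that a single seed entering a rational tangle on any of its four outgoing strings can be completed by one more seed so that the coloring propagates, which already requires flype moves; your version needs a geometric counterpart of this, stated and proved.) A second, smaller gap: your induction quantifies the number of new maxima as $m(T)-m(T')$ and asserts this difference is $0$ or $1$; that claim rests on the combinatorial identity $m(T)=f(T)+2$ (or at least on $m(T')\le m(T)\le m(T')+1$), which you defer to ``the combinatorial description promised earlier'' without proof. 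Until the absorbing step and this monotonicity are actually established, the argument does not yet yield $\beta(L(T))\le m(T)$.
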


This bound is sharp for a class of trees, defined next. A twig is a straight branch connecting a leaf to a branching point. A tree $T$ is said to have many twigs if it is obtained from a subtree $T'\subset T$ by adding at least three twigs to every vertex of $T'$.

\begin{theorem} \label{main} Let $L(T)$ be an arborescent link associated to a plane tree $T$ with many twigs and all weights $\neq 0,\pm 1$. The meridional rank conjecture holds for $L(T)$ and
$$\mu(L(T))=\ \beta(L(T))=m(T).$$
\end{theorem}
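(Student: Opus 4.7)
Combining the classical inequality $\mu(L) \le \beta(L)$ with Theorem~\ref{thm:bound} yields $\mu(L(T)) \le \beta(L(T)) \le m(T)$, so what remains to be proved is the lower bound $\mu(L(T)) \ge m(T)$. My plan is to construct, out of the combinatorics of the plane tree $T$, a quotient group $\Gamma_T$ of $\pi_1(\R^3 \setminus L(T))$ that is generated by the images of meridians and that requires at least $m(T)$ conjugate involutions to be generated. The argument then splits into three steps: a combinatorial computation of $m(T)$, the construction of a meridional epimorphism from the link group onto $\Gamma_T$, and a lower bound on the number of conjugate involutions needed to generate $\Gamma_T$.

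For the first step I would analyse how the parities of the weights control the number of components of $L(T)$ and, under the many-twigs hypothesis, obtain a clean combinatorial description of $m(T)$ directly from the tree $T$. For the second step, associate to each twig of weight $a$ a dihedral quotient $D_a$ of its local rational-tangle group, generated by the two meridional classes at the tangle boundary; the hypothesis $a\neq 0,\pm 1$ is exactly what guarantees that $D_a$ is non-trivial. At each branching vertex $v$ of the subtree $T'$, the incident twig factors are amalgamated along the meridional generators shared across the edges of $T'$, producing a graph-of-groups target $\Gamma_T$ together with a meridional surjection from the Wirtinger presentation of the diagram onto $\Gamma_T$. The many-twigs condition, which enforces at least three twigs at every vertex of $T'$, will be used to ensure that this amalgamation preserves the independence of the dihedral generators and does not collapse the tangle boundary identifications.

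The main obstacle, and the technical heart of the proof, is the rank lower bound $\mu(\Gamma_T)\ge m(T)$. The abelianisation of $\Gamma_T$ only detects the first homology of the link complement and is typically too coarse, so a finer quotient is required. My plan is to assign a colour to each twig and to exhibit a further Coxeter-type quotient of $\Gamma_T$ in which meridional reflections of distinct colours remain pairwise non-conjugate; any meridional generating set must then contain a representative of every colour class, which yields the desired inequality. The many-twigs hypothesis is essential at this stage: when a vertex of $T'$ carries only two twigs, the corresponding colour classes could fuse inside the amalgamation and the count would collapse, whereas at a vertex carrying three or more twigs there is enough room to keep all the classes disjoint. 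Verifying this non-conjugation statement, and constructing the intermediate Coxeter quotient in which it can be read off, is where I expect the bulk of the work to lie.
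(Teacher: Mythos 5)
Your overall architecture is close to the paper's: the upper bound $\mu(L(T)) \le \beta(L(T)) \le m(T)$ does follow from Theorem~\ref{thm:bound}, and the paper likewise obtains the lower bound by mapping $\pi_1(\R^3\setminus L(T))$ meridionally onto a Coxeter-type quotient assembled from rank-two (dihedral) Coxeter quotients of the rational tangles, with a single generator attached to each branching point of $T$, and with the many-twigs condition used exactly as you anticipate --- to prevent two conflicting relations of the form $(ax)^p=(ax)^q=1$ from collapsing generators. The combinatorial bookkeeping runs through the flattening number: one proves $m(T)=f(T)+2$ and builds a quotient of reflection rank $f(T)+2$, inducting on the addition of ramification points.

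The genuine gap is in your third step, the mechanism for extracting the rank lower bound from $\Gamma_T$. You propose to colour the meridional reflections and show that distinct colours remain pairwise non-conjugate in a suitable further quotient, so that any meridional generating set must meet every colour class. This cannot work for the links covered by the theorem: the relation attached to a rational tangle is $(st)^{\alpha}=1$ with $\alpha$ the numerator of the tangle, and whenever $\alpha$ is odd the two reflections $s,t$ are already conjugate in the dihedral factor, hence in $\Gamma_T$ and in every further quotient --- conjugacy cannot be undone by passing to quotients. In the paper's own example (Figure~1) the relations $(ac)^3=(cd)^5=1$ force $a,c,d$ into a single conjugacy class, so a colour count would yield $2$ rather than the needed $4$. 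More generally, the number of conjugacy classes of reflections in a Coxeter group can be far smaller than its rank (all transpositions in a symmetric group are conjugate), so conjugacy-class counting is structurally too weak. What is actually used is the reflection-rank theorem for Coxeter groups (Proposition~\ref{lowerbound}): $C(\Gamma)$ cannot be generated by fewer than $v(\Gamma)$ reflections, regardless of how those reflections are distributed among conjugacy classes. Replacing your non-conjugacy argument by this theorem --- and checking that every edge weight of $\Gamma$ is at least $2$, which is precisely where the hypothesis that all weights of $T$ are $\neq 0,\pm1$ enters --- is the missing ingredient.
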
 

To evaluate the maximal component number $m(T)$, we shall use $f(T)$, the flattening number of $T$. Define a subset of edges of $T$ to be flattening if the complement of their interiors is a subforest of $T$ with no vertex of valency bigger than two. The natural number $f(T)$ is the minimal number of edges among all flattening subsets; this definition appears in the context of braid indices of fibred arborescent links in~\cite{Ba}. In the special case of trees with a bipartite ramification structure, where all vertices of valency bigger than two are even distance apart, the number $f(T)$ is easily seen to coincide with the number of leaves of $T$ minus two. The first three authors proved the meridional rank conjecture for links associated to bipartite trees in~\cite{BBK}, establishing Theorems~\ref{thm:bound} and~\ref{main} for this class of links, although the formulation in terms of the maximal component number is new. Trees with many twigs and trees with a bipartite ramification structure have a small intersection, consisting of star-like trees. The links corresponding to these trees are known as Montesinos links.


Our proof is inspired by the technique developed in~\cite{BBK}. We construct Coxeter quotients of the groups $\pi_1(\R^3 \setminus L(T))$ of rank $f(T)+2$ for trees with many twigs. This is done in the next section and establishes the inequality
$$f(T)+2\leq \mu(L(T)).$$

In the third and last section, we show that the bridge number of arborescent links (without restriction) is bounded above by $f(T)+2$,
$$\beta(L(T))\leq f(T)+2.$$
This in turn is done by computing the Wirtinger number of arborescent diagrams, a combinatorial version of the bridge number introduced in~\cite{BKVV}. Moreover, for all plane trees we establish the equality $$m(T)=f(T)+2.$$

\section{Coxeter quotients for arborescent links}

Coxeter groups are encoded by finite simple weighted graphs. Let $\Gamma$ be a finite simple graph with $v(\Gamma)$ vertices, whose edges carry integer weights $\geq 2$. The corresponding Coxeter group $C(\Gamma)$ is generated by $v(\Gamma)$ elements of order two, one for each vertex of $\Gamma$. Every edge with weight $k$ stands for a relation of the form $(st)^k=1$, where $s,t$ is the pair of generators $s,t$ associated with the two vertices of that edge. Elements of $C(\Gamma)$ conjugate to these generators are called reflections. The minimal number of reflections needed to generate $C(\Gamma)$ is called the reflection rank of $C(\Gamma)$; it is known to equal $v(\Gamma)$. The following elementary lower bound for the meridional rank $\mu(L)$ of links in terms of the reflection rank was derived in~\cite{BBK} (Proposition~1, Section~2). 

\begin{proposition} \label{lowerbound}
Let $L$ be a link whose fundamental group surjects onto a Coxeter group $C(\Gamma)$, so that all meridians are mapped to reflections. Then
$\mu(L) \geq v(\Gamma)$. 
\end{proposition}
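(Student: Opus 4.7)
The plan is to combine the hypothesised surjection onto the Coxeter group with the classical fact, cited in the paragraph preceding the statement, that the reflection rank of $C(\Gamma)$ equals $v(\Gamma)$. The argument is essentially a pullback: a short meridional generating set of $\pi_1(\R^3\setminus L)$ descends under the surjection to a short reflection generating set of $C(\Gamma)$, whereupon the reflection rank lower bound applies directly.

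Concretely, I would set $k=\mu(L)$ and fix a minimal meridional generating set $m_1,\ldots,m_k$ of $\pi_1(\R^3\setminus L)$. Writing $\phi\colon \pi_1(\R^3\setminus L)\twoheadrightarrow C(\Gamma)$ for the given surjection, the hypothesis that meridians go to reflections means each $\phi(m_i)$ is a reflection in $C(\Gamma)$. Since $\phi$ is surjective and the $m_i$ generate $\pi_1(\R^3\setminus L)$, their images $\phi(m_1),\ldots,\phi(m_k)$ generate $C(\Gamma)$. Thus $C(\Gamma)$ admits a generating set of size $k$ consisting entirely of reflections, and the equality of reflection rank with $v(\Gamma)$ yields $k\geq v(\Gamma)$, i.e.\ $\mu(L)\geq v(\Gamma)$.

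The only non-formal input is the equality of reflection rank with $v(\Gamma)$, which the paper already takes as known; everything else is a straightforward pushforward of generators under a surjective homomorphism. I therefore do not expect any serious obstacle. The one point worth a careful word is that ``meridian'' must be taken in the standard sense (a loop freely homotopic to a small positively oriented circle around some component of $L$), so that every element of a meridional generating set is, up to conjugation, in the distinguished conjugacy class on which the hypothesis of the proposition is phrased; this guarantees that each $\phi(m_i)$ is indeed a reflection rather than merely an element of $C(\Gamma)$.
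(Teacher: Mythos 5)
Your argument is correct and is exactly the intended one: the paper does not reprove this statement but cites \cite{BBK}, where the proof is precisely this pushforward of a minimal meridional generating set to a reflection generating set, followed by the fact that the reflection rank of $C(\Gamma)$ equals $v(\Gamma)$. No gaps.
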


We will use the term Coxeter quotient for quotients of link groups that arise by sending all meridians of $L$ to reflections of a Coxeter group. These were introduced by Brunner in~\cite{Br}, as homomorphisms onto Artin groups rather than Coxeter groups.

An important class of links that admit non-cyclic Coxeter quotients are two-bridge links,
which can be encoded by rational numbers $\alpha/\beta$ with relatively prime integers $\alpha, \beta$ and $-\alpha<\beta<\alpha$. As explained in~\cite{BBK}, the two-bridge link $L(\alpha/\beta)$ admits a rank two Coxeter quotient generated by two reflections $s,t$ satisfying the relation $(st)^{\alpha}=1$.

The goal of this section is to construct Coxeter quotients of reflection rank $f(T)+2$, for all arborescent links $L(T)$ with the restrictions stated in Theorem~\ref{main}. For this purpose, we need a recursive formula for the flattening number $f(T)$. We say that a tree $\overline{T}$ is obtained from $T$ by adding a ramification point, if $\overline{T}$ contains an edge $e$, whose complement is the union of $T$ and a star-like tree, whose central vertex $c$ is adjacent to $e$. Figure~2 illustrates this operation and serves as a hint for the proof of the following easy fact.

\begin{figure}[h]
\begin{center}
\raisebox{-5mm}{\includegraphics[scale=1.0]{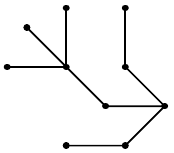}}
$\quad \longrightarrow \quad$
\raisebox{-5mm}{\includegraphics[scale=1.0]{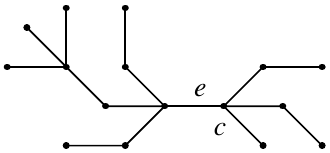}}

\bigskip
\bigskip

\raisebox{-5mm}{\includegraphics[scale=1.0]{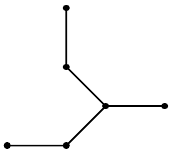}}
$\quad \longrightarrow \quad$
\raisebox{-5mm}{\includegraphics[scale=1.0]{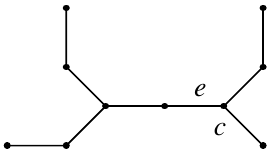}}
\caption{Adding ramification points of valency 4 and 3}
\end{center}
\end{figure}

\begin{lemma} \label{lemma1}
If $\overline{T}$ is obtained from $T$ by adding a ramification point of valency $k\geq 2$, then $f(\overline{T})=f(T)+k-2$.
\end{lemma}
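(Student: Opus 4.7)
I would prove the equality $f(\overline{T}) = f(T) + k - 2$ by establishing both inequalities separately. Let $v$ denote the endpoint of $e$ lying in $T$, and let $e_{1},\ldots,e_{k-1}$ denote the edges of the star-like tree $S$ joining the center $c$ to its $k-1$ leaves $\ell_1, \ldots, \ell_{k-1}$.

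\textbf{Upper bound.} Given a minimum flattening subset $F \subset E(T)$ of size $f(T)$, I would set
\[
\overline{F} \;:=\; F \cup \{e,\, e_{1},\ldots,e_{k-3}\},
\]
which has size $f(T) + k - 2$. The complement $\overline{T} \setminus \mathrm{int}(\overline{F})$ decomposes as the disjoint union of the subforest $T \setminus \mathrm{int}(F)$, a length-two path through $c$ joining $\ell_{k-2}$ to $\ell_{k-1}$, and $k-3$ isolated leaves; every vertex has valency $\leq 2$, so $\overline{F}$ is flattening. The key observation is that one must include $e$ itself rather than an extra $e_i$, so that the valency at $v$ in the complement coincides with its (admissible) valency in $T \setminus \mathrm{int}(F)$ and does not jump to $3$.

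\textbf{Lower bound.} Given a minimum flattening subset $\overline{F}$ of $\overline{T}$, I would set $F := \overline{F} \cap E(T)$. Since $c$ has valency $k$ in $\overline{T}$, the flattening condition at $c$ forces at least $k-2$ of the $k$ edges $\{e, e_{1}, \ldots, e_{k-1}\}$ incident to $c$ to lie in $\overline{F}$; as none of these edges lies in $E(T)$, this yields
\[
|F| \;\leq\; |\overline{F}| - (k-2).
\]
It then remains to check that $F$ is flattening in $T$. Every vertex of $T$ other than $v$ retains its valency in the passage from $\overline{T} \setminus \mathrm{int}(\overline{F})$ to $T \setminus \mathrm{int}(F)$, hence has valency $\leq 2$. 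For $v$ itself I would split on whether $e \in \overline{F}$: if yes, the valency of $v$ is again unchanged; if no, it drops by exactly one, hence is $\leq 1$. Either way $F$ is flattening, yielding $f(T) \leq f(\overline{T}) - (k-2)$.

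\textbf{Main obstacle.} The only non-routine step in either direction is the valency bookkeeping at the single vertex $v$; it is precisely what forces the particular choice of $\overline{F}$ in the upper bound construction and the $e \in \overline{F}$ vs.\ $e \notin \overline{F}$ case split in the lower bound argument. Every other ingredient -- the count of incident edges at $c$, the forest property (automatic since $\overline{T}$ is a tree), and the availability of sufficiently many edges $e_i$ when $k \geq 3$ -- is essentially immediate from the definitions.
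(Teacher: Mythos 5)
Your argument is correct and complete for $k \ge 3$. The paper states this lemma without proof (it is called an ``easy fact'', with Figure~2 offered as a hint), so there is no written argument to compare against; but the two-inequality strategy with the explicit sets $\overline{F} = F \cup \{e, e_1, \ldots, e_{k-3}\}$ and $F = \overline{F} \cap E(T)$ is surely the intended one. Your key point --- that $e$ itself should be placed in the flattening set so that the valency of the attaching vertex $v$ is undisturbed, and that this costs nothing because the flattening condition at $c$ already demands $k-2$ of the $k$ edges incident to $c$ --- is exactly the content of the lemma.

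One caveat: the lemma is stated for $k \ge 2$, and at $k = 2$ your upper-bound construction degenerates: there are no edges $e_1, \ldots, e_{k-3}$ to choose, taking $\overline{F} = F \cup \{e\}$ costs one edge too many, and taking $\overline{F} = F$ can leave $v$ with valency $3$. This is not a repairable gap in your proof but a defect of the statement itself: for $k = 2$ the equality fails in general. For instance, if $T$ is the path $a\,\text{--}\,v\,\text{--}\,b$ and one attaches a length-two branch at $v$ (a new vertex $c$ of valency $2$), then $f(T) = 0$ while $f(\overline{T}) = 1 \neq f(T) + k - 2$. Your lower bound $f(\overline{T}) \ge f(T) + k - 2$ does hold for all $k \ge 2$. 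Since the paper only ever adds ramification points of valency at least three (indeed at least four in Section~2), nothing downstream is affected, but you should state the hypothesis $k \ge 3$ explicitly, as your own closing remark about ``the availability of sufficiently many edges $e_i$'' already suggests.
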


Adding a ramification point of valency $k$ to a tree $T$ has the effect of inserting $k-1$ rational tangles to the arborescent link $L(T)$. This is illustrated in Figure~3 for $k=4$, where each of the three boxes labeled $A,B,C$ stands for a rational tangle determined by the branches incident to the new ramification point, and the number of twists in the central band is given by the weight of that point. Recall that a tree~$T$ satisfying the hypotheses of Theorem~\ref{main} is obtained from a subtree $T' \subset T$ by adding at least three straight branches, or twigs, to every vertex of $T'$. For this reason, $T$ can be constructed inductively from a star-shaped tree by adding ramification points of valency at least four to branching points, i.e. to vertices of valency at least three, as in the upper part of Figure~2.

We will construct a Coxeter quotient of rank $f(T)+2$ by induction on the number of vertices of $T'$. An important element of this construction is that each twist region of the arborescent diagram corresponding to a vertex of $T'$ -- that is, to a branching point of $T$ -- carries a single Coxeter generator. Coincidentally, the base case and the inductive step can be understood in the same diagram: Figure~3 illustrates the base case of a star-shaped tree with three branches, as well as the addition of a ramification point of valency four to an existing branching point. The three labels $x,a,b$ stand for labels of a Coxeter group. Here a label $z$ means that the meridian around the labeled string gets mapped to the generator $z$ of a Coxeter group determined by the link diagram. Our assumption on the weights makes sure that all the rational tangles have non-trivial numerators, hence give rise to Coxeter relations
$(st)^{\alpha}=1$ with $\alpha \geq 2$ (compare the discussion in the second paragraph after Proposition~\ref{lowerbound}).

For the base case, a star-shaped tree, we observe that $L(T)$ admits a Coxeter quotient with $f(T)+2$ generators -- as many as the number of branches. All arcs in the twist region associated with the centre of the star carry the same label,~$x$. This is illustrated on the right side of Figure~3, for a star with three branches.
\begin{figure}[h]
\begin{center} 
\raisebox{-9mm}{\includegraphics[scale=0.8]{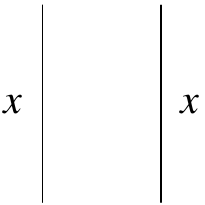}}
$\quad \longrightarrow \quad$
\raisebox{-9mm}{\includegraphics[scale=0.8]{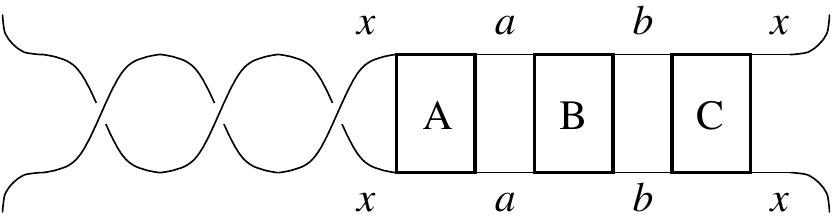}}
\caption{Extending a system of Coxeter generators}
\end{center}
\end{figure}

For the inductive step, we construct a Coxeter quotient of rank $f(\overline{T})+2=f(T)+2+k-2$ for the link $L(\overline{T})$, by adding $k-2$ new reflection generators and $k-1$ new Coxeter type relations determined by the new rational tangles. This is again illustrated in Figure~3 for $k=4$, where the label~$x$ stands for the generator of the branching point, to which we add the new ramification point. The new generators are labeled $a$ and $b$. At this point, it is essential that the new ramification point has at least three twigs. If it had only two twigs, the single new generator $a$ would satisfy two Coxeter type relations with $x$, which are possibly in contradiction. For example, if $p,q \in \N$ are coprime, then the two relations $(ax)^p=1=(ax)^q$ enforce $a=x$.

This inductive construction, together with Proposition~\ref{lowerbound}, proves the desired lower bound on the meridional rank of arborescent links $L(T)$ associated to trees $T$ with many twigs and all weights $\neq 0,\pm 1$:
$$\mu(L(T)) \geq f(T)+2.$$
As pointed out above, the inductive step does not work for arbitrary trees. However, the class of trees $T$ admitting a Coxeter quotient of rank $f(T)+2$ is bigger than the class of trees with many twigs. For example, the labels $a,b,c,d$ of the arborescent knot $L(T)$ in Figure~1 generate a Coxeter quotient of order $f(T)+2=4$, obtained by a similar procedure. The Coxeter relations satisfied by these generators are
$$(ab)^4=(ac)^3=(bc)^2=(bd)^4=(cd)^5=1.$$

\section{Wirtinger and bridge number of arborescent links}

The Wirtinger number of a link, introduced in~\cite{BKVV}, is a combinatorial version of the bridge number. We fix a connected link diagram $D$ with $n$ crossings, whose complement is a union of $n$ arcs. Marking $k$ of these arcs -- called seeds -- by a dot, we obtain a partial coloring of the diagram. We allow the coloring to propagate over crossings by the rule depicted in Figure~4, motivated by the Wirtinger calculus. The idea is that the meridians of all strands marked with a dot are in the subgroup of the link group generated by the meridians of the initially dotted strands, or seeds.

\begin{figure}[h]
\begin{center}
\raisebox{-4mm}{\includegraphics[scale=1.0]{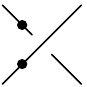}}
$\quad \longrightarrow \quad$
\raisebox{-4mm}{\includegraphics[scale=1.0]{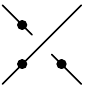}}
\caption{Propagation rule for colors}
\end{center}
\end{figure}

The Wirtinger number $\omega(D)$ is the minimal number of seeds whose coloring propagates to a coloring of the entire diagram~$D$. The main result in~\cite{BKVV} states that the bridge number $\beta(L)$ of a link $L$ coincides with the Wirtinger number of $L$, that is the minimum value of $\omega(D)$ among all diagrams of $L$. In particular, the Wirtinger number of any diagram $D$ of a link $L$ is an upper bound for the bridge number: $$\beta(L) \leq \omega(D).$$

In this section, we will prove that a suitable choice of $f(T)+2$ seeds in a diagram of the arborescent link $L(T)$ propagates to a coloring of the entire diagram, without any restriction on the tree $T$ and its weights. This implies that for all arborescent links $L(T)$
$$\mu(L(T)) \leq \beta(L(T)) \leq f(T)+2.$$
Combining this with the inequality of the previous section, $\mu(L(T)) \geq f(T)+2$, valid for all arborescent links $L(T)$ with the restrictions stated in Theorem~\ref{main}, we obtain the two desired equalities:
$$\beta(L(T))=\mu(L(T))=f(T)+2.$$

We are left to construct diagram colorings for $L(T)$ with $f(T)+2$ seeds, starting with the base case $f(T)=0$, or trees $T$ without ramification points. The corresponding links $L(T)$ are precisely the two-bridge links, or closures of rational tangles. As we can see from Figure~5, two suitably chosen initial seeds are enough to propagate to a coloring of the entire rational tangle. Note that no information on over- and undercrossings is needed in these diagrams.

\begin{figure}[h]
\begin{center}
\raisebox{-0mm}{\includegraphics[scale=0.8]{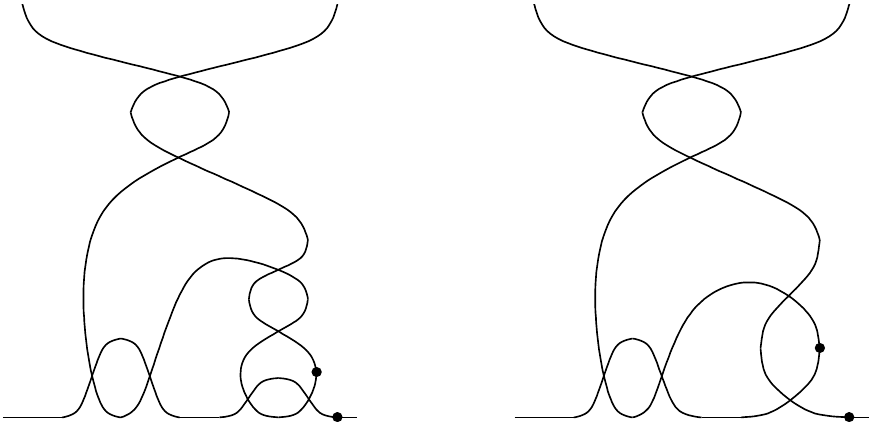}}
\caption{Initial seeds for rational tangles}
\end{center}
\end{figure}

Even more is true: a single seed on any of the four outgoing strings of a rational tangle can be complemented by a second seed, so that the coloring propagates to a coloring of the entire tangle. This may require a sequence of flype moves, as shown in Figure~6.

\begin{figure}[h]
\begin{center}
\raisebox{-20mm}{\includegraphics[scale=0.8]{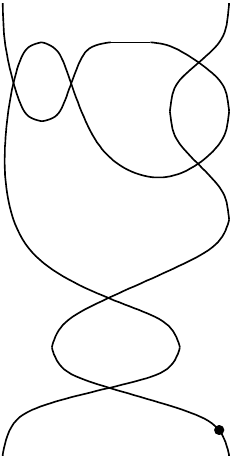}}
$\qquad \longrightarrow \qquad$
\raisebox{-20mm}{\includegraphics[scale=0.8]{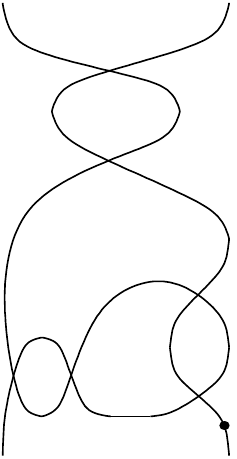}}
\caption{Flype}
\end{center}
\end{figure}

\begin{figure}[h] \label{seeds}
\begin{center}
\raisebox{-0mm}{\includegraphics[scale=0.8]{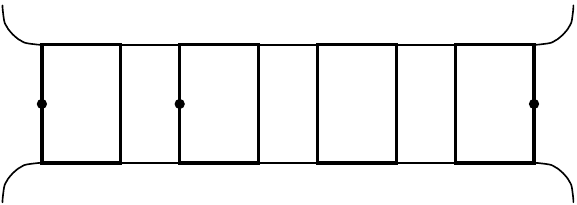}}
\caption{Seed extension}
\end{center}
\end{figure}

We are now set for an inductive construction of a diagram coloring for $L(T)$ with $f(T)+2$ initial seeds, making once again use of Lemma~\ref{lemma1}. Let $\overline{T}$ be a tree obtained from $T$ by adding a ramification point of valency $k$ to $T$. Suppose that a link diagram of $L(T)$ admits a coloring by $f(T)+2$ initial seeds that propagate to a coloring of the entire link diagram. Note that this step potentially requires applying flypes to the standard diagram of $L(T)$, all of which are supported in rational tangles corresponding to branches of T. We obtain a diagram of $L(\overline{T})$ by adding a twist region and $k-1$ rational tangles to the (possibly flyped) diagram of $L(T)$. The twist region corresponding to the new ramification point can be included in any of these rational tangles. For example, in Figure~3, the three crossings together with tangle $A$ form a single rational tangle. Adding $k-2$ suitable seeds, one for each rational tangle except one, as shown schematically in Figure~7 for $k=5$, gives rise to a set of $f(T)+k=f(\overline{T})+2$ seeds that propagate to a coloring of a link diagram of $L(\overline{T})$. This completes the proof of the inequality $\beta(L(T))\leq f(T)+2$. 

To obtain Theorems~\ref{main} and~\ref{thm:bound}, it remains to show that $m(T)=f(T)+2$ for all plane trees. Once again we consider star-like trees as a first step. 

\begin{lemma} \label{lem:starlike}
Given a star-like plane tree $T$, there exists a vertex labeling of $T$ such that the corresponding link $L(T)$ has $f(T)+2$ components.
\end{lemma}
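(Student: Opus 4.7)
Write $n$ for the number of branches of the star-like tree $T$ at its (unique) branching vertex $c$. If $n\leq 2$, then $T$ is a path, $L(T)$ is a two-bridge link for any labelling, and $f(T)=0$; weights can easily be chosen so that the associated rational number $p/q$ has $p$ even, making $L(T)$ a $2$-component link, which equals $f(T)+2$. Henceforth I assume $n\geq 3$, so that $f(T)=n-2$ and the target is $n$ components.

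The link $L(T)$ is the Montesinos link $M(w_0;\,R_1,\ldots,R_n)$, where $w_0$ is the weight at $c$ and $R_i=\alpha_i/\beta_i$ is the rational tangle encoded by the weights along the $i$-th branch. The plan is to choose weights so that $w_0$ is even and each $R_i$ has \emph{vertical} endpoint-pairing, joining NW to SW and NE to SE among its four endpoints---equivalently, $\alpha_i$ odd and $\beta_i$ even in reduced form.

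With such choices, I count the components of $L(T)$ directly from the Montesinos diagram. In the tangle sum $R_1+R_2+\cdots+R_n$, at each of the $n-1$ junctions between consecutive tangles $R_i$ and $R_{i+1}$, the two vertical pairings close up into a single interior loop; the two external arcs surviving the sum then combine into one outer loop under the numerator closure; and the (horizontal) integer tangle $[w_0]$ with $w_0$ even introduces no additional loops. The total is $(n-1)+1=n=f(T)+2$ components.

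It remains to verify that the required vertical pairing on each branch is attainable. A length-$1$ branch with weight $2$ realises the tangle $1/2$, which is vertical. For a branch of length $\ell\geq 2$, an elementary induction on $\ell$---tracking how the parities of $\alpha_i$ and $\beta_i$ transform when a new weight is appended to the continued fraction $[w_1,\ldots,w_\ell]$---shows that an appropriate choice of weights along the branch always produces $\alpha_i$ odd and $\beta_i$ even. This parity bookkeeping is the principal obstacle: once it is in place, the component count is a direct cycle count in the Montesinos diagram.
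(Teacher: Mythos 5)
Your proof follows essentially the same route as the paper's: both realize $L(T)$ as a Montesinos link on $n$ tangles and choose weights so that every rational tangle has the vertical (NW--SW, NE--SE) endpoint pairing, yielding $n-1$ interior loops plus one outer loop, i.e.\ $n=f(T)+2$ components. The only difference is that you spell out the parity bookkeeping for the continued fractions and the even central weight, details the paper delegates to its figures and to the assertion that all three endpoint pairings are realizable for tangles of length at least two.
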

\begin{proof}
Suppose the center vertex of $T$ has valency $n\geq 2$. Then $f(T)=n-2$ and $L(T)$ is a Montesinos link on $n$ rational tangles as pictured.
\begin{figure}[ht]
\begin{center}
\includegraphics[scale=1]{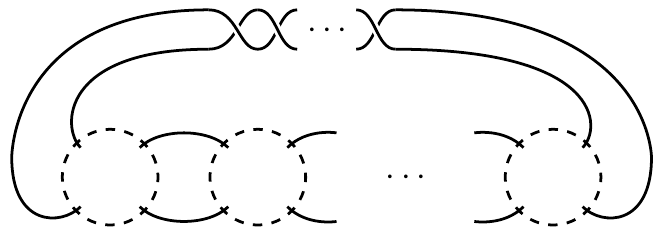}
\caption{Montesinos link on $n$ rational tangles.}
\end{center}
\end{figure}
It now suffices to show that we can choose the labels of $T$ to achieve the connectedness diagram given in Figure~9, so that $L(T)$ has $n=f(T)+2$ components.
\begin{figure}[ht]
\begin{center}
\includegraphics[scale=1]{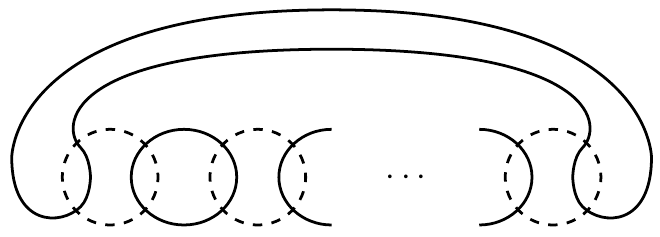}
\caption{Connectedness diagram for a Montesinos link on $n$ rational tangles with suitably chosen weights.}
\end{center}
\end{figure}
Let $R$ be any of the $n$ rational tangles. If $R$ has length one, that is, it consists of a single twist region, then we may choose any even label to achieve the desired connectedness. If $R$ has length at least two, all three ways to connect the four endpoints in pairs can be achieved.
\end{proof}

\begin{proposition}\label{m=f+2}
For any plane tree $T$, the flattening number $f(T)$ and the maximal component number $m(T)$ are related as follows.
\[ m(T)=f(T)+2. \]
\end{proposition}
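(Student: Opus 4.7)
The plan is to establish the two inequalities separately. For the easy direction $m(T) \leq f(T)+2$, I would combine the trivial bound that the number of components of any link is at most its bridge number with the main inequality of Section~3, which states $\beta(L(T)) \leq f(T)+2$ for every arborescent link $L(T)$, without any restriction on the tree or on its weights. Applied to a weighting of $T$ realizing the maximum number of components, this chain immediately yields $m(T) \leq f(T)+2$.

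For the reverse inequality $m(T) \geq f(T)+2$, I would argue by induction on the number of branching points of $T$, using Lemma~\ref{lemma1} to reduce to trees with fewer ramification points. The base case is the class of star-like trees, which is exactly Lemma~\ref{lem:starlike}. For the inductive step, let $\overline{T}$ be obtained from $T$ by adding a ramification point $c$ of valency $k\geq 2$, so that $f(\overline{T}) = f(T)+k-2$. Starting from an optimal weighting on $T$ realizing $f(T)+2$ components, I would extend it by assigning weights to $c$ and to its $k-1$ new leaves, aiming to add exactly $k-2$ new components to $L(\overline{T})$.

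The local diagrammatic modification amounts to inserting a Montesinos-like block at the attachment point: a central twist region corresponding to the weight of $c$, surrounded by $k-1$ new rational tangles, together with the two portal strands coming from $L(T)$ through the edge $e$. Using the tangle flexibility exploited in Lemma~\ref{lem:starlike} --- parity control for single-twist tangles, and all three endpoint pairings for rational tangles of length at least two --- I would tune the $k-1$ new labels and the central label to realize a cyclic connectedness pattern at $c$ analogous to the one in Figure~9, so that $k-2$ of the new tangles each close off into a separate new component, while the remaining tangle together with the central twist absorb the two portal strands.

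The main obstacle will be showing rigorously that the tuning adds exactly $k-2$ components, independent of how the two portal strands happen to be connected through the rest of $L(T)$. The cleanest route is probably to strengthen the inductive hypothesis to record the local connectedness pattern at the attachment point, so that the Montesinos-block analysis at $c$ only has to interact with a prescribed, controllable portal matching, entirely parallel to the connectedness-diagram argument in the proof of Lemma~\ref{lem:starlike}. Once this strengthened hypothesis is maintained through the induction, the equality $m(\overline{T}) = m(T) + (k-2) = f(\overline{T})+2$ follows, completing the proof.
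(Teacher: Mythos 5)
Your proposal matches the paper's proof in both directions: the easy inequality via components $\leq$ bridge number $\leq f(T)+2$, and the reverse by induction on the number of ramification points, with star-like trees (Lemma~\ref{lem:starlike}) as the base case and an inserted Montesinos-like block at the new ramification point contributing exactly $k-2$ closed components while passing the two portal strands through. The "strengthened hypothesis" you anticipate is exactly how the paper resolves the portal-strand issue, by prescribing the connectedness diagram of the inserted tangle so that the component count of the ambient link is unaffected.
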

\begin{proof}
Assign weights to $T$ so that $L(T)$ is a link realizing the maximal component number $m(T)$. We then have $$f(T)+2 \geq \beta(L(T)) \geq m(T),$$ since the inequality $f(T)+2 \geq \beta(L(T))$ holds for any choice of weights, and the number of components of the link $L(T)$ can not exceed its bridge number. 

It remains to show that   $f(T)+2 \leq m(T)$. We do this by choosing labelings on $T$ that result in a link $L(T)$ with $f(T)+2$ components. We induct on the number of ramifications needed to construct $T$. The base case, $T$ is star-like, is treated in Lemma~\ref{lem:starlike}. Now assume $T$ is the result of adding a ramification point, that is, a star-like tree $T_2$, to an arbitrary plane tree $T_1$.
\begin{figure}[ht]
\begin{center}
\includegraphics[scale=1]{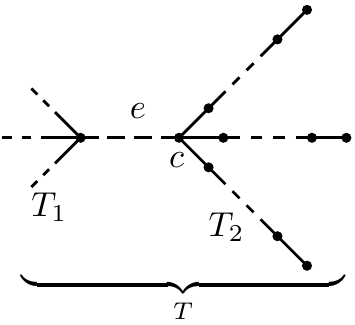}
\caption{New ramification point, $c$.}
\end{center}
\end{figure}
Note that $T_1$ can be constructed using strictly fewer ramifications than $T$. By induction, there exists a labeling of $T_i$ such that $L(T_i)$ has $f(T_i)+2$ components, for $i=1,2$. These labels of $T_1$ and $T_2$ induce a labeling of $T$ which results in the connectedness diagram for $L(T)$ pictured below. 
\begin{figure}[ht]
\begin{center}
\includegraphics[scale=1]{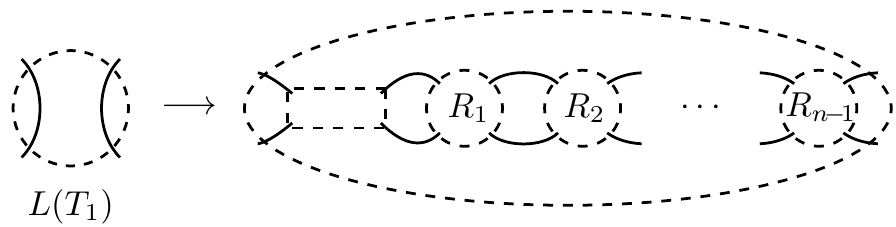}
\caption{Tangle substitution corresponding to the new ramification point.}
\end{center}
\end{figure}
Here, the $R_i$ are rational tangles and the ramification point has valence $n$. 
\begin{figure}[ht]
\begin{center}
\includegraphics[scale=1]{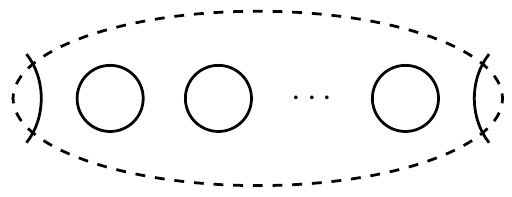}
\caption{Connectedness diagram of inserted tangle induced by the choice of labels on $T_2$.}
\end{center}
\end{figure}

Figure~12 shows the connectedness diagram induced by the chosen labels for $T_2$, with $n-2$ components of $L(T)$ contained therein. Hence, the number of components of $L(T)$ is $$(f(T_1)+2)+(n-2) = f(T_1) +n.$$ By Lemma~\ref{lemma1}, $f(T)=f(T_1)+n-2$, so, as claimed, $L(T)$ has $f(T)+2$ components.
\end{proof}

\end{document}